\title{On an extremal problem in the class of 1-planar graphs}
\keywords{1-planar graph, bipartite graph, graph size}
\begin{document}

\begin{abstract}
A graph $G=(V,E)$ is called 1-planar if it admits a drawing in the plane such that each edge is crossed at most once. In this paper, we study bipartite $1$-planar graphs with prescribed numbers of vertices in partite sets. Bipartite 1-planar graphs are known to have at most $3n-8$ edges, where $n$ denotes the order of a graph. We show that maximal-size bipartite $1$-planar graphs which are almost balanced have not significantly fewer edges than indicated by this upper bound, while the same is not true for unbalanced ones.
We prove that maximal possible sizes of bipartite $1$-planar graphs whose one partite set is much smaller than the other one tends towards $2n$ rather than $3n$. In particular, we prove that if the size of the smaller partite set is sublinear in $n$, then $|E|=(2+o(1))n$, while the same is not true otherwise.
\end{abstract}

\section{Introduction}
One of the general questions in extremal graph theory can be formulated in the following way: Given a family $\mathcal G$ of graphs, what is the maximum number of edges of an $n$-vertex graph $G\in\mathcal G$? One of the fundamental results in this area is the Theorem of Tur\'an, which states that if $\mathcal G$ is the family of $k$-clique-free graphs, then the maximum number of edges of an $n$-vertex graph $G \in \mathcal G$ is at most $\frac{(k-2)n^2}{2(k-1)}$. Tur\'an's theorem was rediscovered many times and has many corollaries. For $k=3$ we obtain Mantel's theorem: the maximum number of edges of an $n$-vertex bipartite graph is at most $\frac{n^2}{4}$.

By prescribing the family $\mathcal G$ we can study different classes of graphs. If $\mathcal G$ is a family of planar graphs, then from the Euler's formula we obtain that any $n$-vertex planar graph ($n\ge 3$) contains at most $3n-6$ edges. More strongly, any $n$-vertex planar graph can be extended to an $n$-vertex planar graph with $3n-6$ edges. Similar proposition holds for bipartite planar graphs: any $n$-vertex bipartite planar graph ($n\ge 3$) contains at most $2n-4$ edges, moreover, every $n$-vertex bipartite planar graph can be extended to an $n$-vertex bipartite planar graph with $2n-4$ edges.

If a graph is not planar, then each its drawing in the plane contains some crossings of its edges. If a graph $G$ can be drawn in the plane so that each of its edges is crossed by at most one other edge, then it is 1-planar. It is known \cite{fm,pt,sch} that any $n$-vertex 1-planar graph ($n\ge 3$) has at most $4n-8$ edges, but not every $n$-vertex 1-planar graph can be extended to an $n$-vertex 1-planar graph with $4n-8$ edges, see \cite{begghr}. %In \cite{ch1} it is proved that every 1-planar drawing of a 1-planar graph has at most $n-2$ crossings. Consequently, every $n$-vertex bipartite 1-planar graph has at most $3n-6$ edges (if we remove one crossed edge for each crossing, then we obtain a drawing without crossings, that is, a planar graph). To our knowledge, no $n$-vertex bipartite 1-planar graph ($n\ge 3$) with $3n-6$ edges is known.

In this paper we deal with the family of bipartite 1-planar graphs. We consider the problem of finding a bipartite 1-planar graph with given sizes of partite sets which has the largest number of edges among all such graphs. It is known \cite{k} that any $n$-vertex bipartite 1-planar graph has at most $3n-8$ edges for even $n\not =6$ and at most $3n-9$ edges for odd $n$ and for $n=6$.
%
%We include our exemplary construction 
Our exemplary construction confirming that these upper bounds are sharp are included at the end of Section~\ref{SectionAlmostBalanced}
(and in Lemma \ref{xyz}).
The maximal possible number of edges in such a graph keeps also relatively close to $3n$ when the cardinalities of its partite sets  
%A similar is still true when those sets 
are almost even, see %e.g. 
Lemma %\ref{odd} and 
\ref{xyz} below.
On the other hand we notice that as graphs investigated get more unbalanced
(i.e., one partite set becomes much smaller than the other) then this value
drops, see Corollary~\ref{c:lower}, and tends towards the double of the order.
Investigating this process more thoroughly,
due to Corollary~\ref{c:lower} and Lemma~\ref{ge} we are in fact able to precisely describe
for what proportions of the sizes of the partite sets
we may observe this phenomenon,
see comments in the concluding section.

%-----------------------
%{\color{red}
%We prove that the known general upper bound for this number is optimal up to additive constant at most two, see Lemma \ref{3n-8}, and is very close %to triple order of the graph. This is witnessed by bipartite graphs with even partite sets.
%A similar is still true when those sets are almost even, see Lemmas \ref{odd} and \ref{xyz}.
%On the other hand we notice that as graphs investigated get more unbalanced
%(i.e., one partite set becomes much smaller than the other) then this value
%drops, see Corollary~\ref{lower}, and tends towards the double of the order.
%Investigating this process more thoroughly,
%due to Corollary~\ref{lower} and Lemma~\ref{ge} we are in fact able to precisely describe
%for what proportions of the sizes of the partite sets
%we may observe this phenomenon,
%see comments in the concluding section.
%}
%-----------------------

Our results also partially answer the question of \'E. Sopena \cite{s}: How many edges we have to remove from the complete bipartite graph with given sizes of the partite sets to obtain a 1-planar graph? Observe that, this question is equivalent to our problem.

%Note that 1-planar graphs have many applications, for example, visualization of UML diagrams or displaying protein structure in bioinformatics (in these cases it is important to find a drawing which is from a geometric point of view as simple as possible).

\section{Notation}
In this paper we consider simple graphs. We use the standard graph theory terminology by \cite{diestel}. We use $V(G)$ and $E(G)$ to denote the vertex set and the edge set of a graph $G$, respectively.
The \emph{degree} of a vertex $v$ is denoted by $\deg(v)$. A vertex of degree $k$ is called a \emph{$k$-vertex}. Similarly, a face (of a plane graph) of size $k$  is called a \emph{$k$-face}.

We will use the following notation introduced in \cite{fm}. Let $G$ be a 1-planar graph and let $D=D(G)$ be a 1-planar drawing of $G$ (that is, a drawing of $G$ in the plane in which every edge is crossed at most once; we will also assume that no edge is self-crossing and adjacent edges do not cross). Given two non-adjacent edges $pq$,  $rs$ $\in E(G)$, the \textit{crossing} of $pq$, $rs$ is the common point of two arcs $\stackrel{\frown}{pq}$, $\stackrel{\frown}{rs}$ $\in D$ (corresponding to edges $pq$, $rs$). Denote by $C=C(D)$ the set of all crossings in $D$ and by $E_0$ the set of all non-crossed edges in $D$. The \textit{associated plane graph} $D^{\times}=D^{\times}(G)$ of $D$ is the plane graph such that $V(D^{\times})=V(D)\cup C$ and $E(D^{\times})=E_0 \cup \{xz,yz|xy \in E(D)-E_0, z\in C,z\in xy\}$. Thus, in $D^{\times}$, the crossings of $D$ become new vertices of degree 4; we call these vertices \emph{false}. Vertices of $D^{\times}$ which are also vertices of $D$ are called \emph{true}. Similarly, the edges and faces of $D^{\times}$ are called false, if they are incident with a false vertex, and true otherwise.

Note that a 1-planar graph may have different 1-planar drawings, which lead to non-isomorphic associated plane graphs.

\section{Unbalanced bipartite 1-planar graphs}

Let $G$ be a bipartite 1-planar graph such that the partite sets of $G$ have sizes $x$ and $y$. In this part of the paper we show that if $x$ is small compared to $y$, then the maximal number of edges in a corresponding bipartite 1-planar graph $G$ shall tend towards $2|V(G)|$ rather than staying close to $3|V(G)|$. %This phenomenon shall be exhibited and investigated more deeply in the following observations.

\subsection{An upper bound for the number of edges}
The following assertion improves the result of \cite{ch1} (stating that any 1-planar drawing of an $n$-vertex 1-planar graph has at most $n-2$ crossings) when $x$ is small compared to $y$.

\begin{lemma}\label{6x-12}
Let $G$ be a bipartite 1-planar graph such that the partite sets of $G$ have sizes $x$ and $y$, $2\le x\le y$. Then $G$ has a $1$-planar drawing with at most $6x-12$ crossings.
\end{lemma}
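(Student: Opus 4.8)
The plan is to produce the required drawing as a $1$-planar drawing $D$ of $G$ with the least possible number of crossings $c$ (such a drawing exists since $G$ is $1$-planar), and then to prove $c\le 6x-12$. If $x=2$ this is immediate: $G$ is a subgraph of the planar graph $K_{2,y}$, so $c=0=6x-12$. Hence assume $x\ge 3$ from now on.

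The first step is to use bipartiteness locally. If edges $pq$ and $rs$ cross in $D$ then they are non-adjacent, so $p,q,r,s$ are four distinct vertices; since every edge of $G$ meets the smaller part $X$ in exactly one vertex, precisely two of $p,q,r,s$ lie in $X$, and these two are distinct. In terms of the associated plane graph $D^{\times}$ this means that each false vertex has exactly two true neighbours belonging to $X$. Using this I reduce $D^{\times}$ to a plane multigraph $H$ with vertex set $X$: delete from $D^{\times}$ all true vertices lying in $Y$; every false vertex then has degree exactly $2$ with both remaining neighbours in $X$, and suppressing these degree-$2$ vertices yields a plane multigraph $H$ on $X$ with exactly $c$ edges, namely one edge $pr$ for each crossing of an edge at $p$ with an edge at $r$. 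Since the two edges meeting at a crossing are non-adjacent, their $X$-endpoints differ, so $H$ is loopless.

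It then remains to bound $|E(H)|=c$ by $6x-12$. As $H$ is planar and $x\ge 3$, its underlying simple graph has at most $3x-6$ edges, so it suffices to show that $H$ has \emph{multiplicity at most two}, i.e.\ that any two vertices $p,r\in X$ are joined by at most two crossings of $D$. This is where I expect the real difficulty to be. One should argue by contradiction: if three crossings $z_1,z_2,z_3$ — with $z_i$ the crossing of an edge $pq_i$ at $p$ with an edge $rs_i$ at $r$ — all join $p$ to $r$, then $D$ can be redrawn with fewer crossings while remaining $1$-planar, contradicting minimality; the natural surgery removes the ``middle'' crossing among $z_1,z_2,z_3$ by rerouting one of its two edges along the arcs cut off by the other two. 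The delicate part is that the six edges through $z_1,z_2,z_3$ may be entangled with the rest of $D$, so the rerouting must be carried out globally and one must check that no edge ends up crossed twice; this is the main obstacle. Once multiplicity at most two is established, $c=|E(H)|\le 2(3x-6)=6x-12$, and $D$ is the desired drawing.
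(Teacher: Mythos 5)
Your reduction is sound and matches the first half of the paper's argument: the paper, too, fixes a crossing-minimal drawing, notes that every crossing involves exactly two distinct vertices of the smaller part $X$, and encodes each crossing by a black--black edge of a plane multigraph on $X$ (drawn hugging the two half-edges at the crossing, rather than obtained by suppressing the false vertex, but it is the same object). The genuine gap is in the second half, and you name it yourself: the claim that in a crossing-minimal drawing every pair $p,r\in X$ is involved in at most two crossings is exactly the heart of the lemma, and you do not prove it --- you only gesture at a ``natural surgery'' and call its verification ``the main obstacle''. Minimality gives you leverage only in a special situation: if a lens bounded by two of the curves $pz_ir$ contains no vertex of $X$, then every edge drawn inside it joins a white vertex to $p$ or $r$ only, i.e.\ the interior is a $K_{2,m}$-type (planar) configuration and can be redrawn crossing-free, contradicting minimality. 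But when the lenses adjacent to the middle curve contain vertices of $X$ together with white vertices attached to them, there is no obvious cheaper rerouting of the middle pair of edges: any new route must cross edges that are already crossed, and nothing in your sketch excludes this. One can easily produce $1$-planar drawings of bipartite graphs in which one pair of black vertices carries three or more crossings with further black vertices separating consecutive lenses (insert gadgets as in Lemma~\ref{ge} into the faces of a plane multigraph with parallel edges joining $p$ and $r$); your approach must show no such drawing is crossing-minimal, and that is not done.

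The paper's own proof deliberately avoids the multiplicity-two claim. It proves only the weaker statement above (a $2$-gon of the crossing multigraph whose interior contains a crossing but no vertex of $X$ contradicts minimality), then deletes one edge from each vertex-empty $2$-gon, observes that the cleaned multigraph has no empty $2$-gons and hence at most $3x-6$ edges (it extends to a triangulation, where parallel edges separated by vertices are harmless), and finally charges every crossing to a black edge lying on a common $3$-face with it, each black edge absorbing at most two charges. In this accounting a pair of vertices of $X$ may well be involved in more than two crossings. So to complete your proof you must either genuinely establish the multiplicity-two property of minimal drawings (which the paper never does, and which is at best delicate) or replace it by the weaker dichotomy plus the two-to-one charging argument.
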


\begin{proof}
Color the vertices of $G$ in the smaller partite set with black and the rest of the vertices with white. Among all possible 1-planar drawings of  $G$, we denote by $D$ a drawing that has the minimum number of crossings and by $D^\times$ its associated plane graph. Color the false vertices with red.

Now we extend $D^\times$ in the following way. Let $v$ be a false vertex incident with black vertices $v_1$ and $v_2$. We draw a new edge $v_1v_2$ without introducing any crossings by following the  edges $v_1v$ and $v_2v$ from $v_1$ and $v_2$ until they meet in a close neighborhood of $v$, see Figure \ref{cross} for illustration.

\begin{figure}[ht!]
\centerline{
\begin{tabular}{c}
\includegraphics{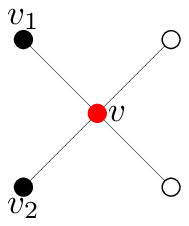}
\end{tabular}
$\rightarrow$
\begin{tabular}{c}
\includegraphics{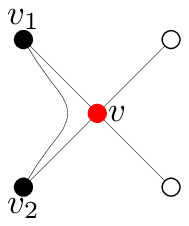}
\end{tabular}
}
\caption{The extension of $D^\times$.}
\label{cross}
\end{figure}

For every false vertex $v$ we draw a new edge $v_1v_2$ as described. Note that the new drawing might contain parallel edges. Denote the new (multi)graph by $H$.

Let $H'$ be a subgraph of $H$ (with the same embedding) induced by the black and red vertices. First we show that if the (multi)graph $H'$ has a separating 2-cycle (i.e. a cycle whose interior and  exterior contain a vertex), then its interior and also exterior contain at least one black vertex each.
To see that, assume, w.l.o.g., that $H'$ contains a separating 2-cycle which has only red vertices in the interior. This 2-cycle is a separating cycle also in $H$ since it consists of two edges which join black vertices.  The red vertices correspond to crossings, therefore there are some white vertices in the interior of this separating 2-cycle in $H$. No black vertex is in the interior of this cycle, hence all edges which join white vertices from this interior with black vertices could have been drawn without edge crossings, a contradiction with the minimality of the number of crossings in the considered 1-planar drawing $D$.

If for every 2-cycle of $H'$ with an empty interior or exterior we remove one edge incident with it, then we obtain the graph $H''$. We say that an edge of $H''$ is black if both its endvertices are black. Observe that every red vertex is incident with a 3-face in $H''$ (similarly as in $H'$). Moreover, every such 3-face is incident with one black edge. Therefore, the number of red vertices in $H''$ is at most the double of black edges in $H''$.

Now consider the subgraph  of $H''$ induced by the black vertices. This (multi) graph can be extended to a triangulation by introducing additional edges (without inserting new vertices) because even if it contains a 2-cycle, then its interior and exterior contain a vertex. This triangulation has at most $3x-6$ edges (because it has $x$ vertices). From this it follows that the graph $H''$ has at most $2(3x-6)$ red vertices. Consequently, the number of crossings in $D$ is at most $6x-12$.
\end{proof}

\begin{cor}\label{c:lower}
If $G$ is a bipartite 1-planar graph such that the partite sets of $G$ have sizes $x$ and $y$, $2\le x\le y$, then $|E(G)|\le 2|V(G)|+6x-16$.
\end{cor}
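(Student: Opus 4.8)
The strategy is to combine the crossing bound from Lemma~\ref{6x-12} with the Euler-formula bound for plane graphs. Start from a $1$-planar drawing $D$ of $G$ realizing at most $6x-12$ crossings, and pass to the associated plane graph $D^\times$. If $c$ denotes the number of crossings of $D$, then $D^\times$ is a (simple) plane graph on $|V(G)|+c$ vertices, and its number of edges is $|E_0|+2(|E(G)|-|E_0|)=|E(G)|+c$, where $E_0$ is the set of non-crossed edges. The plan is to bound the number of faces of $D^\times$ from below using bipartiteness of $G$, and then feed everything into Euler's formula $V-E+F=2$.

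The key point is a girth-type estimate for $D^\times$. The true vertices of $D^\times$ are exactly the vertices of $G$, which are two-colored (the partite sets of $G$); the false vertices are the crossings. One checks that in $D^\times$ every cycle that uses only true vertices has even length (it is a cycle of $G$), so short odd faces can only occur through false vertices; in fact a $3$-face of $D^\times$ must be incident with a false vertex, since $G$ is triangle-free. Each false vertex has degree $4$ and is incident with at most four faces, so the number of $3$-faces is at most $4c$; counting incidences between edges and faces then gives $2(|E(G)|+c)\ge 3\cdot(\text{number of }3\text{-faces})+4\cdot(\text{number of other faces})\ge 4F-(\text{number of }3\text{-faces})\ge 4F-4c$. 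Hence $F\le \tfrac12(|E(G)|+c)+c$. Substituting into Euler's formula $(|V(G)|+c)-(|E(G)|+c)+F=2$ yields, after simplification, $|E(G)|\le 2|V(G)|+2c-4$, and then $c\le 6x-12$ from Lemma~\ref{6x-12} gives $|E(G)|\le 2|V(G)|+2(6x-12)-4 = 2|V(G)|+12x-28$.

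The arithmetic above is off by a factor from the claimed $6x-16$, so the real work — and the step I expect to be the main obstacle — is to sharpen the face count so that crossings contribute only $+1$ rather than $+2$ to the effective edge surplus. The right way is not to discard the $1$-planar structure and work with $D^\times$ naively, but to argue directly in $D$: delete the non-crossed edges incident with true vertices only where needed, or better, use that each crossing of $D$ can be charged to a genuine quadrilateral region of $D$ whose four edges are distinct edges of $G$, so that the ``cost'' of a crossing is one extra pair of half-edges, not two. Concretely, I would replace each pair of crossing edges $pq,rs$ by the single plane configuration on $\{p,q,r,s\}$ contributing a $4$-face, obtaining a bipartite plane (multi)graph on $|V(G)|$ vertices with at least $|E(G)|-c$ edges whose faces are all of even length $\ge 4$ except for the $c$ ``crossing quadrilaterals'', and apply the bipartite-planar bound $|E|\le 2|V|-4$ to it after accounting for those $c$ faces; this should lose only $c$ rather than $2c$. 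Once that refined inequality $|E(G)|\le 2|V(G)|+c-4$ is in hand, Lemma~\ref{6x-12} immediately gives $|E(G)|\le 2|V(G)|+(6x-12)-4 = 2|V(G)|+6x-16$, as required. The delicate part is verifying that parallel edges and degenerate crossing quadrilaterals (when, say, $p=r$ is forbidden by non-adjacency, but two crossing quadrilaterals could share edges) do not spoil the even-girth argument; handling this is exactly the bookkeeping already rehearsed in the proof of Lemma~\ref{6x-12}, and I would reuse that machinery.
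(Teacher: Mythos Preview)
Your second approach lands on the right inequality $|E(G)|\le 2|V(G)|+c-4$, but you are working much too hard to reach it. The paper's proof is a one-liner: from a $1$-planar drawing with $c\le 6x-12$ crossings (Lemma~\ref{6x-12}), delete one edge from each crossing. What remains is a \emph{subgraph} of $G$, hence bipartite, with no crossings, hence planar, and with exactly $|E(G)|-c$ edges. The bipartite-planar bound gives $|E(G)|-c\le 2|V(G)|-4$, i.e.\ $|E(G)|\le 2|V(G)|+c-4\le 2|V(G)|+6x-16$. There is no need to pass through $D^\times$, no quadrilateral replacement, no multigraph, and no degenerate-case bookkeeping: edge deletion preserves simplicity and bipartiteness automatically.

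Two smaller remarks on your first attempt. First, the edge count of $D^\times$ is $|E(G)|+2c$, not $|E(G)|+c$: each of the $c$ crossings involves two crossed edges, so there are $2c$ crossed edges and $|E_0|=|E(G)|-2c$; each crossed edge is split in two, giving $|E_0|+2\cdot 2c=|E(G)|+2c$ edges in $D^\times$. Second, even with the correct count the $D^\times$ route loses a factor, exactly as you observed, because a false vertex can border up to four $3$-faces; the fix is not a sharper face count but simply to stay in $G$ and delete edges, as above.
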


\begin{proof}
Lemma \ref{6x-12} implies that by removing at most $6x-12$ edges from $G$ we can get a planar graph. This planar graph is also bipartite. Thus, it has at most $2|V(G)|-4$ edges. Consequently, the number of edges of $G$ is at most $2|V(G)|+6x-16$.
\end{proof}

Note that, the bound in Corollary \ref{c:lower} is tight for $x=2$ (in this case $G$ is a  bipartite planar graph). For $x=3$ we can obtain a tight upper bound by a different approach.

\begin{lemma}\label{3y}
If $G$ is a bipartite 1-planar graph such that the partite sets of $G$ have sizes $3$ and $y\ge3$, then $|E(G)|\le 2|V(G)|$. Moreover, this bound is tight.
\end{lemma}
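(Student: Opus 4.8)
The plan is to handle the upper bound by a degree count that reduces it to a single fact about $K_{3,7}$, and the tightness by an explicit construction.

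Write $X=\{a,b,c\}$ for the part of size $3$ and $Y$ for the part of size $y$, so $|V(G)|=y+3$. Since $X$ is independent, every edge of $G$ has exactly one endpoint in $Y$, whence $|E(G)|=\sum_{w\in Y}\deg(w)$ and each summand lies in $\{0,1,2,3\}$. If $n_k$ denotes the number of vertices of $Y$ of degree $k$, then
$$2|V(G)|-|E(G)|=2(n_0+n_1+n_2+n_3+3)-(n_1+2n_2+3n_3)=6+2n_0+n_1-n_3 .$$
Hence the bound $|E(G)|\le 2|V(G)|$ follows at once from the inequality $n_3\le 6$, i.e.\ from the claim that at most six vertices of $Y$ can be adjacent to all of $a,b,c$. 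The subgraph of $G$ induced by $X$ together with any set of such vertices is a complete bipartite graph $K_{3,m}$, and a subgraph of a 1-planar graph is 1-planar; so the whole upper bound reduces to the statement that $K_{3,7}$ is not 1-planar.

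To prove that $K_{3,7}$ is not 1-planar I would take a 1-planar drawing $D$ with a minimum number $c$ of crossings and pass to its associated plane graph $D^{\times}$, which by Euler's formula has $10+c$ vertices, $21+2c$ edges and $13+c$ faces. Two observations cut down the possibilities. First, every face of $D^{\times}$ has size at least $3$, and a triangular face has exactly one false vertex: two false vertices are never adjacent, two true vertices of the same part are never adjacent, so a triangle must consist of one false vertex together with one vertex of $X$ and one vertex of $Y$ joined by an uncrossed edge. Second, a false vertex comes from a crossing of an edge incident with some $x\in X$ and an edge incident with some $x'\in X\setminus\{x\}$; the four faces around it are separated by its neighbours in the cyclic order $x,x',w,w'$ with $w,w'\in Y$, and the two faces lying in the corners $\{x,x'\}$ and $\{w,w'\}$ have size at least $4$. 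Consequently at most two of the four faces at each crossing are triangles, so the number $t$ of triangular faces satisfies $t\le 2c$. Comparing $\sum_f|f|=2(21+2c)$ with $\sum_f|f|\ge 4(13+c)-t$ yields $t\ge 10$, hence $c\ge 5$; and $c\le 6$ by Lemma~\ref{6x-12}. Thus $c\in\{5,6\}$, and in the case $c=5$ one is forced into the very rigid situation of exactly ten triangular and eight quadrilateral faces (with analogous rigidity for $c=6$). The main obstacle is to finish these cases: one has to exploit the exact degrees in $D^{\times}$ (namely $3$ at every vertex of $Y$, $7$ at each of $a,b,c$, and $4$ at every crossing) together with the local configuration forced around the at least ten triangular faces, and derive a contradiction. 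A convenient alternative is to quote the known classification of 1-planar complete bipartite graphs, which already contains the fact that $K_{3,7}$ is not 1-planar, in which case this step is immediate.

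Finally, for tightness, fix a 1-planar drawing of $K_{3,6}$ and a crossing in it, say of an edge at $x\in X$ with an edge at $x'\in X$; the face of the associated plane graph lying in the corner $\{x,x'\}$ of this crossing has both $x$ and $x'$ on its boundary. For every $y\ge 6$ insert into that face $y-6$ new vertices, each joined to $x$ and to $x'$ by two uncrossed edges drawn inside the face. The result is a bipartite 1-planar graph with partite sets of sizes $3$ and $y$ and with $18+2(y-6)=2(y+3)=2|V(G)|$ edges, so the bound is best possible.
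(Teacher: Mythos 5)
Your argument is correct and essentially the same as the paper's: a degree count over the larger part reduces the upper bound to the fact that $K_{3,7}$ is not 1-planar (which the paper, like you, takes from the known classification of 1-planar complete bipartite graphs in the Czap--Hud\'ak paper), and tightness comes from a 1-planar drawing of $K_{3,6}$ extended by degree-2 vertices placed in a face incident with two vertices of the small part. Note that your sketched Euler-formula argument for the non-1-planarity of $K_{3,7}$ is, as you acknowledge, not finished (it only narrows the crossing number to $5$ or $6$), so the citation route is the one that actually completes the proof, exactly as in the paper.
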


\begin{proof}
Let $V_1$ and $V_2$ be the partite sets of $G$, where $|V_1|=3$. In \cite{ch2} it is proved that the complete bipartite graph $K_{3,7}$ is not 1-planar. Therefore, there are at most six vertices of degree three in $V_2$ (and the remaining vertices have degree at most two). Consequently, $|E(G)|= \sum_{v \in V_2}\deg(v)\le 6 \cdot 3+ (|V(G)|-9)\cdot 2=2|V(G)|$.

$K_{3,6}$ is the smallest bipartite 1-planar graph for which the upper bound is attained. By adding 2-vertices to the larger partite set we can obtain more such graphs, see Figure \ref{K36+}.

\begin{figure}[ht]
\begin{center}
\includegraphics[angle=90]{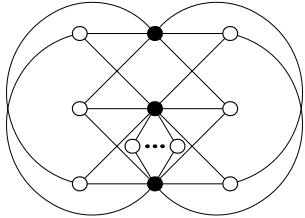}
\caption{A bipartite 1-planar graph $G$ with $3+y$ vertices and $2|V(G)|$ edges.}
\label{K36+}
\end{center}
\end{figure}
\end{proof}

\subsection{Lower bound for the number of edges}
\begin{lemma}\label{ge}
Let $x,y$ be integers such that $x\ge3$  and $y\ge6x-12$. Then there exists a bipartite 1-planar graph $G$ such that the partite sets of $G$ have sizes $x$ and $y$ and $|E(G)|\ge2|V(G)|+4x-12$.
\end{lemma}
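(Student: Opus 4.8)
The plan is to realize such a $G$ as a dense ``core'' built on the $x$ black vertices together with $6x-12$ white vertices, and then to pad it with white vertices of degree $2$. First I would record the following reduction: it suffices to construct, for every $x\ge 3$, a bipartite $1$-planar graph $G_x$ whose partite sets have sizes $x$ and $6x-12$, with $|E(G_x)|\ge 18x-36$, admitting a $1$-planar drawing that has a face incident with two black vertices which can be joined by a simple arc inside that face. Indeed, for any $y\ge 6x-12$ one then starts from $G_x$ and repeatedly inserts a new white vertex of degree $2$ inside such a face, joining it to the two black vertices on the face. Each insertion preserves bipartiteness and $1$-planarity, adds exactly two edges, and again creates a face of the required kind, so after $y-(6x-12)$ insertions one obtains a bipartite $1$-planar graph with partite sets of sizes $x$ and $y$ and with $(18x-36)+2\bigl(y-(6x-12)\bigr)=6x+2y-12=2|V(G)|+4x-12$ edges.

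For $G_x$ I would use $K_{3,6}$ as a building block. It is $1$-planar, it is the densest complete bipartite graph with a part of size $3$ (recall from the proof of Lemma~\ref{3y} that $K_{3,7}$ is not $1$-planar), it has $18$ edges and six white vertices of degree $3$, and a most economical $1$-planar drawing of it has exactly four crossings (it meets the bound $3n-9$). Writing the black vertices as $b_1,\dots,b_x$, I would let $G_x$ be obtained by \emph{chaining} $x-2$ copies of $K_{3,6}$ (or a minor variant thereof): the $i$-th copy, $1\le i\le x-2$, has black part $\{b_i,b_{i+1},b_{i+2}\}$ and six private white vertices, so that consecutive copies share exactly two black vertices and copies at distance at least $3$ share nothing. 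This gives exactly $6(x-2)=6x-12$ white vertices and $18(x-2)=18x-36$ edges, and a drawing with $4(x-2)=4x-8$ crossings, which is comfortably below the bound $6x-12$ of Lemma~\ref{6x-12}; for $x=3$ one simply has $G_3=K_{3,6}$, matching the extremal example of Lemma~\ref{3y}.

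The heart of the matter — and the step I expect to be the main obstacle — is to show that these $x-2$ copies of $K_{3,6}$ can be drawn simultaneously in a $1$-planar way, that is, with no crossings between edges of different copies. The natural strategy is to draw them ``nested'': fix a four-crossing $1$-planar drawing of $K_{3,6}$ that has a face incident with two prescribed of its three black vertices, draw the first copy, then draw the second copy entirely inside the face of the first incident with the two black vertices they share, and iterate. What makes this delicate is that $K_{3,6}$ has no slack: one cannot afford to route a copy ``in a disk'' with all three of its black vertices on the outer face, since a short face-counting argument (such an outer face has even length $\ge 6$, forcing the simplified planar subgraph to have at most $13$ edges) shows this needs five crossings rather than four, which would give $5(x-2)$ crossings overall and contradict $|E(G_x)|=18x-36$. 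Hence one must produce an explicit four-crossing drawing of $K_{3,6}$ in which two chosen black vertices lie on a common face while all three remain ``accessible enough'' to keep the nesting going, and verify throughout that no edge is crossed twice. Carrying this out — presumably by a direct construction in the spirit of the almost-balanced construction of Lemma~\ref{xyz}, with the few smallest values of $x$ checked separately — is where the real work lies.
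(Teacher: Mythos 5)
Your overall scheme (a dense ``core'' on the $x$ black vertices plus $6x-12$ white vertices, followed by padding with white $2$-vertices placed in a face incident with two black vertices) is sound and matches the paper's second step, and the arithmetic $18x-36+2\bigl(y-(6x-12)\bigr)=2|V(G)|+4x-12$ is correct. But the core construction is not actually carried out, and that is precisely where the lemma lives. You yourself flag it: you need an explicit $1$-planar drawing of $K_{3,6}$ that can be placed inside a region whose boundary contains the two black vertices shared with the previous copy, with the third black vertex and all six white vertices inside, and which in turn exposes an internal face incident with the ``new'' pair of black vertices so that the next copy (and ultimately the $2$-vertex padding) can be inserted — all while keeping every edge crossed at most once and without any crossings between different copies. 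None of this is exhibited or verified, so as written the proposal is a plan, not a proof. Moreover, the side argument you give is confused: the number of crossings in a drawing has no bearing on the number of edges of the graph, so ``five crossings per copy'' could not ``contradict $|E(G_x)|=18x-36$''; $1$-planarity only forbids an edge being crossed twice, and neither Lemma~\ref{6x-12} (which bounds crossings in \emph{some} drawing, not every drawing) nor the edge count rules out a $5$-crossing disk drawing. So the obstacle you worry about is not the right one, and the obstacle that matters (the existence of the nested drawings with the required accessible faces) is left open.

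For comparison, the paper avoids this delicate nesting entirely: take any plane triangulation $T$ on the $x$ black vertices (it has $2x-4$ faces), insert into each face three white vertices joined to the three corners of that face, drawn $1$-planarly \emph{inside} the face (the configuration $W_3$), and then delete the edges of $T$. Different faces cannot interact, so $1$-planarity is immediate, and one gets $9(2x-4)=18x-36$ edges on $x+(6x-12)$ vertices; the remaining $y-(6x-12)$ white vertices are then added as $2$-vertices in a $4$-face, exactly as in your reduction. If you want to salvage your chained-$K_{3,6}$ approach, you must supply the explicit drawing of $K_{3,6}$ with the boundary/face properties above and an induction showing the invariant (``there is a face of the current drawing incident with the two black vertices needed next'') is maintained; without that the argument has a genuine gap.
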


\begin{proof}
First assume that $y=6x-12$.

Let $T$ be a triangulation on $x$ vertices. From the Euler's formula it follows that every triangulation on $x$ vertices has $2x-4$ faces. Let $T'$ be a graph obtained from $T$ by inserting a configuration $W_3$ depicted in Figure \ref{insert} into each of its faces.

\begin{figure}[ht!]
\centerline{
\begin{tabular}{c}
\includegraphics{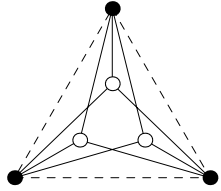}
\end{tabular}
}
\caption{The configuration $W_3$.}
\label{insert}
\end{figure}

Let $G$ be a graph obtained from $T'$ by removing the original edges of $T$. Clearly, $G$ is a bipartite 1-planar graph (the original vertices of $T$ form the first partite set and the added vertices form the second partite set). It is a routine matter to check that $|V(G)|=7x-12$ and $|E(G)|=18x-36=2|V(G)|+4x-12$.

\medskip
Now suppose that $y=6x-12+k$ for some $k\ge 1$. In this case we take the 1-planar drawing of $G$ (as it is defined above), next we add $k$ vertices to a 4-face of $G^\times$ and finally we join (without edge crossings) each of them with the two true vertices of this face. In such a way we obtain a new bipartite 1-planar graph which has $7x-12+k$ vertices and $18x-36+2k$ edges. Hence, $|E(H)|=2|V(H)|+4x-12$.
\end{proof}

%Since every planar graph is also 1-planar we immediately have the following proposition.
%
%\begin{prop}\label{bipplan}
%For any positive integers $x \ge 2$, $y\ge 2$ there is a bipartite 1-planar graph $G$ such that the partite sets of $G$ have sizes $x$, $y$ and $|E(G)|=2|V(G)|-4$.
%\end{prop}
%
%\begin{proof}
%Let $C$ be a fixed embedding of a cycle $v_1v_2v_3v_4v_1$ on four vertices. Add $x-2$ vertices to the inner part of $C$ and $y-2$ vertices to the outer part of $C$. Next join the vertices in the inner part with $v_1$ and $v_3$, thereafter join the vertices in the outer part with $v_2$ and $v_4$. Clearly, we can join the vertices so that no crossing arise. In such a way we obtain a bipartite planar graph $G$ such that its partite sets have sizes $x$ and $y$. The graph $G$ has $x+y-4$ vertices of degree two, two vertices of degree $x$ and two vertices of degree $y$. Hence, $|E(G)|=\frac12\cdot\sum_{v \in V(G)}\deg(v)=2x+2y-4=2|V(G)|-4$.
%\end{proof}

\section{Almost balanced bipartite 1-planar graphs\label{SectionAlmostBalanced}}

\begin{lemma}\label{le}
Let $x,y$ be integers such that $x\ge3$, $y\ge6$ and $x\le y\le6x-12$. Then there exists a bipartite 1-planar graph $G$ such that the partite sets of $G$ have sizes $x$, $y$ and $|E(G)|\ge\frac52 |V(G)|+\frac x2-\frac{17}{2}$.
\end{lemma}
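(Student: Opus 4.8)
The plan is to mimic the construction used in Lemma \ref{ge}, but to exploit the fact that here the larger partite set is comparatively smaller, so we can afford to insert a denser bipartite gadget into each triangular face of a base triangulation. In Lemma \ref{ge} each face of a triangulation $T$ on $x$ vertices received the configuration $W_3$, which uses $3$ new white vertices and contributes $6$ edges to the final bipartite graph (after deleting the edges of $T$); this forces $y = 6x-12$, i.e. exactly $3$ new vertices per face. Since now $y$ is allowed to be as small as $x$, we have fewer new vertices to distribute — on average roughly $y/(2x-4)$ per face, which may be smaller than $3$ — so instead of a single fixed gadget we will use gadgets $W_j$ with $j \in \{1,2,3\}$ new vertices inside a triangular face, where $W_3$ is the configuration of Figure \ref{insert}, $W_2$ and $W_1$ are its natural analogues with $2$ and $1$ internal white vertices joined $1$-planarly to the three corners of the face, and we choose how many faces receive each type so that the total number of new vertices is exactly $y$ and the total edge count is maximized. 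First I would check that each $W_j$ is drawable $1$-planarly inside a triangle with the three corners on the outer boundary, count that $W_j$ contributes $b(j)$ edges between the face-corners and the internal vertices (with $b(3)=6$, and $b(2)$, $b(1)$ computed from the drawing — a $1$-vertex can be joined to all three corners, a $2$-configuration to a few more), and then distribute the $y$ new vertices: write $y = 6x-12 - r$ with $0 \le r \le 5x-12$, put $W_3$ into $2x-4-q$ faces and the remaining $q$ faces receive $W_2$ or $W_1$ gadgets so that the vertex total comes out right, arranging to use the smaller gadgets as sparingly as the budget permits.

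The second step is the bookkeeping. With $V(G) = x + y$ and the edge count being the sum of $b(\cdot)$ over all faces (after deleting $T$'s edges, exactly as in Lemma \ref{ge}), I would express $|E(G)|$ as a function of $x$ and $y$ (equivalently of $x$ and the deficiency $r = 6x-12 - y$), obtain a linear expression, and verify it is at least $\frac52(x+y) + \frac x2 - \frac{17}{2}$. Note the target has a clean interpretation: at $y = 6x-12$ the claimed bound gives $\frac52(7x-12) + \frac x2 - \frac{17}{2} = 18x - \tfrac{60}{1}\cdot\ldots$ — I would just confirm it evaluates to the $18x-36$ of Lemma \ref{ge}, so the construction is consistent at the upper end of the range and the inequality should be an equality (or near-equality) there; at the lower end $y \approx x$ it should again be comfortably satisfied. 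If the single-gadget-per-face scheme does not quite reach the bound for all intermediate $y$, the fallback is to start from Lemma \ref{ge}'s graph for the parameter $x' \le x$ with $6x'-12 = y$ (or the closest feasible value) and then add the remaining $x - x'$ black vertices cheaply into $4$-faces, each joined to two true vertices, contributing $2$ edges apiece — but since adding a black vertex of degree $2$ only adds $2 < \frac52 + \frac12$ to the right-hand side per unit of $|V|$... actually it adds exactly $2$ to $|E|$ and $1$ to $|V|$, so it weakens the ratio; hence the per-face denser-gadget approach is the one that must carry the argument for small $y$, and combining the two ideas (denser gadgets plus a few cheap pendant vertices) gives full coverage of the range $x \le y \le 6x-12$.

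The main obstacle I anticipate is producing the intermediate gadgets $W_2$ and $W_1$ with a good edge count while keeping the drawing $1$-planar inside a triangle and keeping the whole amalgamated drawing $1$-planar (edges inside one face cannot cross edges inside another, which is automatic, but crossings on the boundary edges of $T$ must be handled — in Lemma \ref{ge} the edges of $T$ are ultimately deleted, so any crossings along them disappear, and I would preserve exactly that feature). Concretely, I expect $b(2)$ to be around $5$ and $b(1)=3$, and the arithmetic should then show that distributing the $y$ vertices greedily into $W_3$'s first and using $W_2$'s or a single $W_1$ to absorb the remainder yields an edge count matching the stated linear lower bound; the delicate point is checking the few boundary residue cases of $r \bmod 2$ (or $r \bmod 3$) so that the floor/ceiling effects never drop us below $\frac52|V(G)| + \frac x2 - \frac{17}{2}$. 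Once the gadget edge counts and the $1$-planarity of the inserted configurations are established, the remainder is the routine linear bookkeeping sketched above.
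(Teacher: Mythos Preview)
Your plan has a genuine gap, and it is structural rather than cosmetic. If you triangulate on all $x$ black vertices and then insert only white vertices into the triangular faces, every white vertex lives inside a triangle with exactly three black corners, so its degree in the final bipartite graph is at most $3$. Hence, regardless of how you split the faces among gadgets $W_1,W_2,W_3$ (and incidentally $b(3)=9$, not $6$: Lemma~\ref{ge} gives $18x-36$ edges over $2x-4$ faces), the total edge count is at most $3y$. Comparing with the target $\tfrac52(x+y)+\tfrac{x}{2}-\tfrac{17}{2}=3x+\tfrac{5y}{2}-\tfrac{17}{2}$, the inequality $3y\ge 3x+\tfrac{5y}{2}-\tfrac{17}{2}$ holds only when $y\ge 6x-17$, i.e.\ within five of the top of your range. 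For any smaller $y$ (say $x=y=10$, where you need $47$ edges but produce at most $30$) the construction falls well short. Your fallback of adding black $2$-vertices, as you yourself note, only worsens the ratio, so the two ideas combined still fail.

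What the paper does differently is the point you are missing: the base triangulation is \emph{not} on $x$ vertices but on roughly $\tfrac{y}{6}+2$ black vertices, and the face gadgets insert \emph{both} black and white vertices. Because each face then contains up to six black vertices (three corners plus up to three new ones), the inserted white vertices can have degree larger than $3$, which is exactly what is needed to push $|E(G)|$ above $3y$. Concretely, every vertex inserted into a face---black or white---ends up with degree $3$, giving $|E(G)|=3\bigl(x+y-(\tfrac{y}{6}+2)\bigr)=\tfrac52(x+y)+\tfrac{x}{2}-6$, with a small loss when $y\not\equiv 0\pmod 6$ accounting for the $-\tfrac{17}{2}$. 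To repair your argument you must allow some black vertices to be inserted into faces rather than placing all of them in the base triangulation.
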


\begin{proof}
First assume that $y=6r$ for some integer $r\ge 1$. Let $T$ be a triangulation on $\frac{6r}{6}+2=\frac y6 +2$ vertices. Color the vertices of $T$ with black. Let $x=\frac y6 +2+3s+t$, where $s\ge0$ and $t \in \{0,1,2\}$ ($x\ge \frac y6 +2$ since $y\le6x-12$).
Let $T'$ be a graph obtained from $T$ by inserting a configuration $B_3$ depicted in Figure \ref{b} into $s$ faces, a configuration $B_2$ into one face if $t=2$,  a configuration $B_1$ into one face if $t=1$ and the configuration $B_0$ to the other faces of $T$ and removing the original edges of $T$. This modification is possible if and only if $T$ has at least $s+1$ faces (or $s$ faces if $t=0$). The number of faces of $T$ is $2(\frac y6+2)-4=\frac y3$, so we need to show that $s+1 \le \frac y3$. From $x=\frac y6 +2+3s+t$ and $x\le y$ we obtain $\frac 45+\frac 65 s+\frac 25 t\le \frac y3 $. The inequality $s+1 \le \frac 45+\frac 65 s+\frac 25 t$, or equivalently $1\le s+2t$ does not hold if and only if $t=s=0$. But in this case the inequality $s+1 \le \frac y3$ trivially holds. Observe that $T'$ has $(\frac y6+2)+3s+t=x$ black vertices and $3\cdot\frac y3=y$ white vertices, moreover it has $3(x+y-(\frac y6+2))=\frac 52 (x+y)+\frac x2-6$ edges.

\begin{figure}[ht!]
\centerline{
\begin{tabular}{c}
\includegraphics{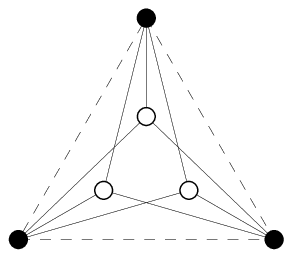}\\$B_0$
\end{tabular}
\begin{tabular}{c}
\includegraphics{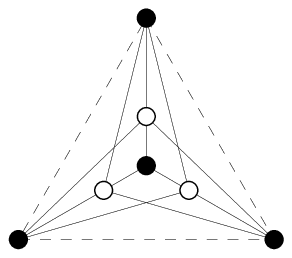}\\$B_1$
\end{tabular}
\begin{tabular}{c}
\includegraphics{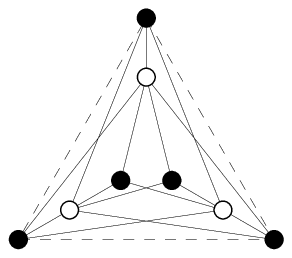}\\$B_2$
\end{tabular}
\begin{tabular}{c}
\includegraphics{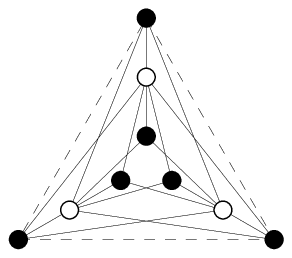}\\$B_3$
\end{tabular}
}
\caption{The configurations $B_0$, $B_1$, $B_2$ and $B_3$.}
\label{b}
\end{figure}

If $y=6r+u$, where $r\ge1$ and $u \in \{1,2,3,4,5\}$, then we proceed similarly as above. In this case $T$ is a triangulation on $\frac{6r+6}{6}+2$ vertices. Using this triangulation we obtain (by the same construction as previously) a bipartite 1-planar graph on $x+(6r+6)$ vertices and $3x+\frac52 \cdot(6r+6)-6$ edges. Note that if $x \not=11$ or $y \not=11$, then we must have inserted the configuration $B_0 $ into at least two faces of $T$ according to our construction, since otherwise the graph $T'$ has at least ($\frac{6r+6}{6}+2)+3\cdot (\frac{6r+6}{3}-2)+1=7r+4$ black vertices. At the same time, the graph $T'$ has $x\le 6r+u$ black vertices, and therefore $7r+4\le6r+u$, or equivalently $r+4 \le u$. This inequality in turn has only one solution, namely $r=1$ and $u=5$. This implies $x=y=11$.

If we remove $6-u$ white vertices of two configurations of type $B_0$, then we obtain a bipartite 1-planar graph with $x+y$ vertices and $3x+\frac52 \cdot(6r+6)-6-3(6-u)=\frac52\cdot(x+6r+u)+\frac x2-9+\frac u2\ge\frac52\cdot(x+y)+\frac x2-\frac{17}{2}$ edges.

If $x=y=11$, then there is a bipartite 1-planar graph $G$ such that the partite sets of $G$ have sizes $x$ and  $|E(G)|=3|V(G)|-8$, see \cite{k}.
\end{proof}

%For the sake of improvement
The following result provides a lower-bound improvement
in the case when $G$ is very close to being balanced.
%we prove the following.
%In the case when $G$ is more balanced we prove the following.

\begin{lemma}\label{xyz}
Let $x,y,z$ be positive integers such that $x\ge3$, $y=x+z$, $z\ge0$. Then there exists a bipartite 1-planar graph $G$ such that the partite sets of $G$ have sizes $x$ and $y$ and $|E(G)|=3|V(G)|-8-z$.
\end{lemma}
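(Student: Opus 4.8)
The plan is to produce an explicit bipartite 1-planar graph on partite sets of sizes $x$ and $y=x+z$ with exactly $3(x+y)-8-z$ edges by a direct construction, modelled on the ideas already used in the proofs of Lemmas~\ref{ge} and \ref{le}. First I would treat the balanced case $z=0$ (so $x=y=n/2$), where the target size is $3n-8$; here I would simply invoke the known construction realising the bound $3n-8$ for even $n$ from \cite{k} — indeed this is precisely what the excerpt promises when it says the upper bound $3n-8$ is sharp and points to ``our exemplary construction.'' So the real content is the case $z\ge1$, where one must lose exactly $z$ edges relative to $3n-8$ while enlarging the gap between the partite sets by $z$.

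For the general case I would start from a balanced bipartite 1-planar graph $G_0$ on partite sets $A_0$, $B_0$ with $|A_0|=|B_0|=x$ and $|E(G_0)|=3\cdot 2x-8=6x-8$, realising the extremal bound; take a fixed 1-planar drawing $D_0$ and its associated plane graph $G_0^\times$. The idea is then to add the $z$ extra white vertices one at a time into a suitably chosen face. Each newly inserted white vertex should be joinable, without creating new crossings, to as many black vertices as the face allows. In the associated plane graph a true face bounded by true (black) vertices can host a new white vertex adjacent to at most $3$ of them if the face is a triangle (a priori a black triangular true face); more generally a $4$-face of $G^\times$ with two true black vertices lets us add one white vertex of degree $2$. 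To reach the bound $3|V(G)|-8-z = (6x-8) + 3z$ after adding $z$ white vertices (which also adds $z$ to $|V|$ and demands $3z$ new edges while keeping the drawing 1-planar), \emph{each} inserted white vertex must bring exactly $3$ new edges. So the key step is to exhibit, in $G_0^\times$ (or a mild modification of it), a black true triangular face, and to show that after inserting a white vertex joined to its three black corners and subdividing appropriately, the resulting drawing is still 1-planar and still contains such a face, so that the insertion can be iterated $z$ times.

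The technical heart — and the step I expect to be the main obstacle — is guaranteeing the persistence of a ``black triangular pocket'' after each insertion, together with 1-planarity. Concretely I would look for a region of $D_0$ in which three black vertices $b_1,b_2,b_3$ form a crossing-free triangle bounding a face that contains only edges among $\{b_1,b_2,b_3\}$ and white neighbours; inside it, repeatedly place a new white vertex $w$ with edges $wb_1,wb_2,wb_3$ drawn inside the current triangle, then recurse inside one of the three sub-triangles $b_ib_jw$. Since $w$ is white and $b_i$ black this keeps the graph bipartite, and since all new edges are drawn inside a crossing-free triangular region with no other edges, no new crossings arise; each step adds one vertex and three edges, exactly as needed. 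The remaining routine checks are: that the extremal balanced construction for $x=y$ actually contains such a black triangular face in some 1-planar drawing (for small $x$ one may need a specific construction, e.g.\ based on hexagonal/$W_3$-type gadgets as in Lemma~\ref{ge}, or on the construction from \cite{k}), and the bookkeeping $|E(G)| = (6x-8) + 3z = 3(x+y)-8-z$ and $|V(G)| = 2x+z = x+y$. I would also double-check the base case $x=3$: here $y=3+z$ and the claimed value $3(6+z)-8-z = 10+2z = 2(x+y)+4$ must be realised, which is consistent with (indeed slightly above the range of) the $x=3$ analysis around Lemma~\ref{3y}, so one should verify directly that a suitable $K_{3,3}$-based drawing admits $z$-fold white-vertex insertion of degree $3$ — this small-case verification, rather than the asymptotic argument, is where the delicate drawing work lies.
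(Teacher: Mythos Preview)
Your proposal contains a basic arithmetic slip that derails the whole plan. With $|V(G)|=2x+z$ you have
\[
3|V(G)|-8-z \;=\; 3(2x+z)-8-z \;=\; (6x-8)+2z,
\]
not $(6x-8)+3z$. So after starting from the balanced extremal graph on $2x$ vertices with $6x-8$ edges, each of the $z$ new vertices must contribute exactly \emph{two} edges, not three. This is also why your $x=3$ sanity check came out confusing: $10+2z$ is \emph{below} $2|V(G)|=12+2z$, not ``slightly above'' it.

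The degree-$3$ insertion scheme you outline is in any case infeasible for a structural reason: you are looking for a ``black true triangular face'' in $G_0^\times$, i.e.\ a $3$-face bounded by three true vertices and three true edges. But three true edges on three true vertices would form a $3$-cycle in the bipartite graph $G_0$, which is impossible. There is no black triangle to recurse into. Once the arithmetic is corrected, the natural fix is exactly what the paper does: locate in the $1$-planar drawing of the balanced extremal graph a region whose boundary contains two vertices from the \emph{same} partite set, drop all $z$ new vertices into that region, and join each of them to those two vertices by crossing-free arcs. This keeps the drawing $1$-planar, keeps the graph bipartite (the new vertices go into the opposite class), and yields $|E(H)|=(6x-8)+2z=3|V(H)|-8-z$ on $|V(H)|=2x+z$ vertices.
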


\begin{proof}
First we take a 1-planar drawing of a bipartite 1-planar graph $G$ on $x+x$ vertices and $6x-8$ edges, see e.g. \cite{k}. The edges of this drawing divide the plane into some regions. We insert $z$ vertices to the region which is incident with two vertices from the same partite set and join the added vertices with these two ones (without edge crossings), see Figure \ref{3n-}.

\begin{figure}[ht!]
\centerline{
\begin{tabular}{c}
\includegraphics{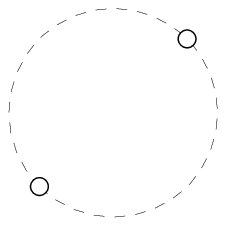}
\end{tabular}
$\rightarrow$
\begin{tabular}{c}
\includegraphics{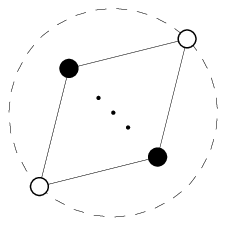}
\end{tabular}
}
\caption{The extension of $G$.}
\label{3n-}
\end{figure}

Let $H$ denote the obtained graph. Clearly, $H$ is a bipartite 1-planar graph with $|V(H)|=|V(G)|+z=2x+z$ and $|E(H)|=|E(G)|+2z=6x-8+2z=3|V(H)|-8-z$.
%\medskip
%If $x$ is odd, then first we construct a bipartite 1-planar graph $G$ on $(x-1)$ white and $(x-1)$ black vertices and $6x-14$ edges (as above). Thereafter we insert $z+1$ black vertices to the inner part of the cycle $C_1$ and join all of them with the two white vertices (without edge crossings). Finally, we add one white vertex to the inner part of $C_1$ and join it with four black vertices such that we cross each edge at most once, see Figure \ref{final}.
%\begin{figure}[ht!]
%\centerline{
%\begin{tabular}{c}
%\includegraphics{final.eps}
%\end{tabular}
%}
%\caption{The last step of the construction.}
%\label{final}
%\end{figure}
%
%The obtained graph has $2x+z$ vertices and $6x-8+2z$ edges.
\end{proof}

For the sake of completeness we describe a construction of a bipartite 1-planar graph $G$ on $x+x$ vertices and $3(x+x)-8$ edges.

Let $x=2k$ for some positive integer $k$. If $k=1$, then $G$ is a cycle on four vertices. Let $H$ be a graph consisting of $k\ge2$ cycles $C_i=x_{1,i}y_{1,i}x_{2,i}y_{2,i}x_{1,i}$ on four vertices, $i=1,\dots,k$. Take an embedding of $H$ such that the cycle $C_i$ is in the inner part of $C_j$
(i.e., inside the bounded part of the plane with boundaries determined by $C_j$) if $i<j$.  Next we extend this drawing of $H$ by adding the edges $x_{1,i}y_{1,i+1}$, $x_{1,i}y_{2,i+1}$,  $x_{2,i}y_{1,i+1}$, $x_{2,i}y_{2,i+1}$, $x_{1,i+1}y_{1,i}$, $x_{1,i+1}y_{2,i}$, $x_{2,i+1}y_{1,i}$ and $x_{2,i+1}y_{2,i}$ for $i=1,\dots,k-1$ so that the edge $x_{\ell,i+1}y_{j,i}$ crosses the edge $x_{\ell,i}y_{j,i+1}$ for $j,\ell\in\{1,2\}$, $i=1,\dots, k-1$, see Figure \ref{nn} for illustration.

\begin{figure}[ht!]
\centerline{
\begin{tabular}{c}
\includegraphics{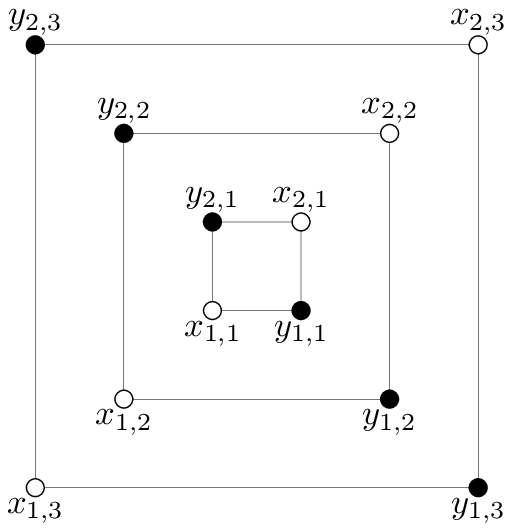}
\end{tabular}
$\rightarrow$
\begin{tabular}{c}
\includegraphics{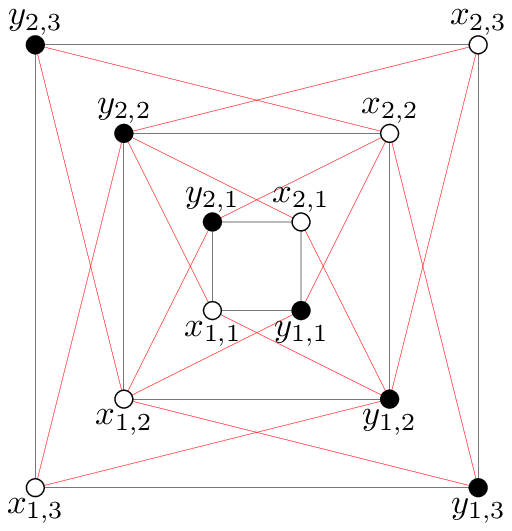}
\end{tabular}
}
\caption{A construction of a bipartite 1-planar graphs with $x+x$ vertices and $6x-8$ edges for $x$ even.}
\label{nn}
\end{figure}

The new graph has $4k$ vertices of degree six and eight vertices of degree four, therefore it has $12k-8$ edges.

If $x=2k+1$, then we modify the graph obtained for $x=2k$ in the following way. First we remove the edges $x_{1,i}y_{1,i-1}$ for $i=2,3,\dots, k$ and the edges $x_{1,i}y_{1,i}$ for  $i=2,3,\dots, k-1$. Thereafter we add the edges $x_{1,i}y_{1,i+2}$ for $i=1,2,\dots, k-2$ and the edges $x_{1,i}y_{1,i+3}$ for $i=1,2, \dots, k-3$. Finally, we add a vertex to the region which is incident with the vertices $x_{1,1},y_{1,1}, y_{1,2}$ and join it with the vertices $y_{1,1},y_{1,2},y_{1,3}, y_{2,1}$; then add a vertex to the region which is incident with the vertices $x_{1,k-1},x_{1,k}, y_{1,k}$ and join it with the vertices $x_{1,k-2},x_{1,k-1},x_{1,k}, x_{2,3}$ as it is depicted in Figure \ref{nnuj}.

We removed $2k-3$ edges and added two vertices and $2k+3$ edges. Therefore the obtained bipartite 1-planar graph has $4k+2$ vertices and $(12k-8)-(2k-3)+(2k+3)=12k-2=3(4k+2)-8$ edges.

%\begin{lemma}\label{odd}
%Let $x\ge3$ be a positive odd integer. Then there exists a bipartite 1-planar graph $G$ such that the partite sets of $G$ have sizes $x$ and $|E(G)|=3|V(G)|-9$.
%\end{lemma}

%
%\begin{proof}
%First we use the construction defined in the proof of Lemma \ref{3n-8} to obtain a bipartite 1-planar graph $G$ on $(x-1)+(x-1)$ vertices and $6x-14$ edges. Next we insert two vertices to the inner part of the cycle $C_1$ and add five edges as it is depicted in Figure \ref{f:odd}.
%
%\begin{figure}[ht!]
%\centerline{
%\begin{tabular}{c}
%\includegraphics{c1+2v.eps}
%\end{tabular}
%}
%\caption{The cycle $C_1$ with two new vertices and five new edges.}
%\label{f:odd}
%\end{figure}
%
%The obtained graph has $2x$ vertices and $6x-9$ edges.
%\end{proof}

%We believe that the bound from the construction (in the proof of Lemma \ref{3n-8}) is tight, cf. Theorem~\ref{bip1plan}.
%\begin{con}
%If $G$ is a bipartite 1-planar graph on at least four vertices, then $|E(G)|\le 3|V(G)|-8$.
%\end{con}

\begin{figure}[ht!]
\centerline{
\begin{tabular}{lll}
\includegraphics{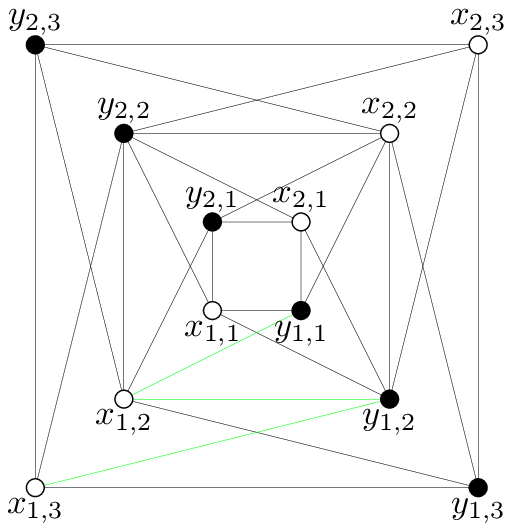}
&&
\includegraphics{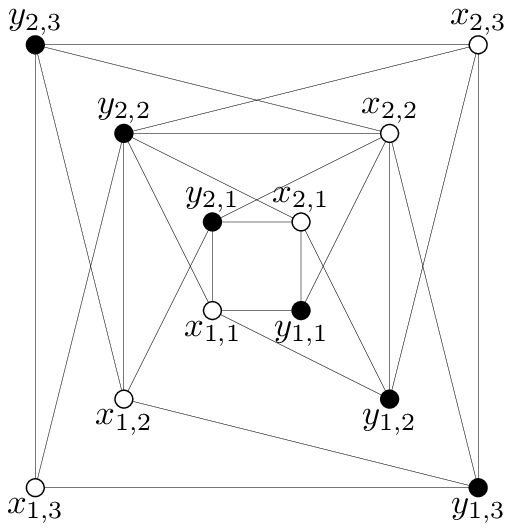}
\\
\includegraphics{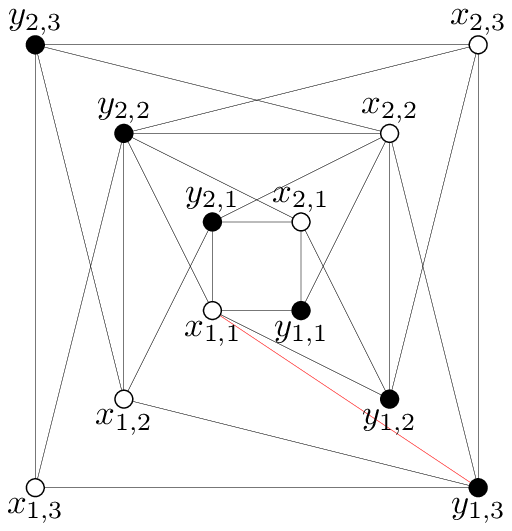}
&&
\includegraphics{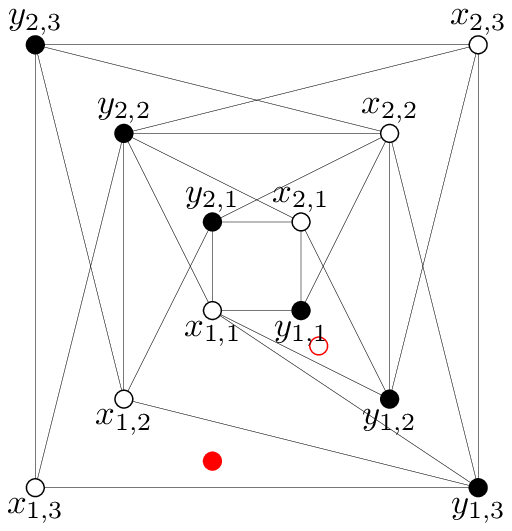}
\\
\includegraphics{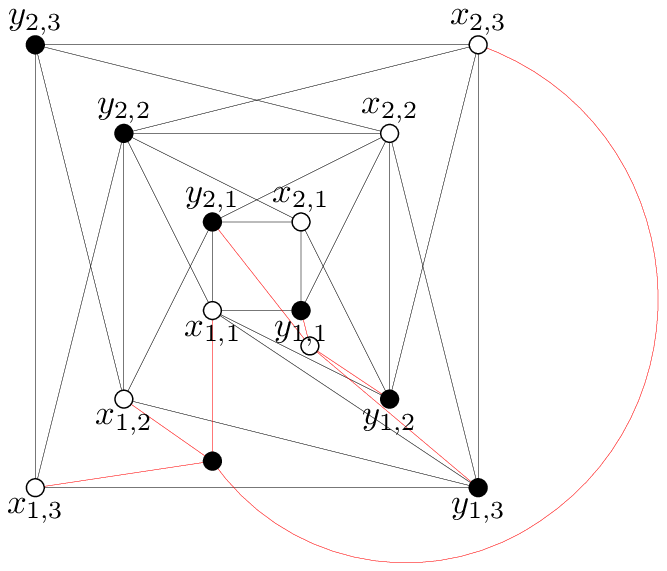}
\end{tabular}
}
\caption{A construction of a bipartite 1-planar graphs with $x+x$ vertices and $6x-8$ edges for $x$ odd.}
\label{nnuj}
\end{figure}

\section{Comments}

For given integers $x,y$, $x\leq y$, let $G_{x,y}$ be a bipartite 1-planar graph with partite sets of sizes $x$ and $y$ with the maximal number of edges. Denote by $g_{x,y}$ the size of this graph.
By \cite{k}, we always have $g_{x,y}\leq 3|V(G_{x,y})|-8$. It follows from Lemma~\ref{xyz}, that $g_{x,y}$ keeps close to $3|V(G_{x,y})|$ if $G_{x,y}$ is balanced enough, i.e., when $x$ is not significantly smaller than $y$.
The larger is the difference between $x$ and $y$, the smaller multiplicity of $|V(G_{x,y})|$ expresses $g_{x,y}$, as exemplified by Corollary \ref{c:lower}. %... and especially ....
By Lemma~\ref{ge} however it never drops under $2|V(G_{x,y})|$ if $x\geq 3$. This implies a natural question on how the ratio $g_{x,y}/|V(G_{x,y})|$ depends on the proportion of $x$ and $y$, in particular, when this ratio gets closer to $2$ rather than $3$.

Our research was thus motivated by the wish
to reveal a kind of threshold
for $x$, given by a function of $y$ under which $g_{x,y}/|V(G_{x,y})|$ actually
converges to $2$ as $y$ tends to infinity.
The results of this paper imply the following solution of this problem.
Suppose $x=f(y)$ is any fixed linear function of $y$ (e.g., $x=0.1y$),
then by Corollary~\ref{c:lower} and Lemma~\ref{ge} there exist constants $c_1$ and $c_2$ such that
$$(2+c_1)|V(G_{x,y})|\leq g_{x,y}\leq (2+c_2)|V(G_{x,y})|$$
(for $y$ large enough). If on the other hand, $x$ is expressed by any sublinear function of $y$,
then $g_{x,y}/|V(G_{x,y})|=2+o(1)$, cf. Corollary~\ref{c:lower}.

Note also that if $x\geq \frac{1}{6}y+2$, then by Lemma~\ref{le}, $g_{x,y}$ exceeds $\frac{5}{2}|V(G_{x,y})|$.
On the other hand, we believe that for $x\leq \frac{1}{6}y+2$, our construction from Lemma~\ref{ge} is optimal and thus conclude by posing the following conjecture.

\begin{con}
For any integers $x,y$ such that $x\ge 3$  and $y\ge6x-12$, every bipartite 1-planar graph $G$ with partite sets of sizes $x$ and $y$ has at most $2|V(G)|+4x-12$ edges.
\end{con}

%\section{Conclusion}
%
%In this paper we proved the following propositions.
%\begin{theorem}
%Let $x,y$ be integers such that $x\ge3$ and $y\ge6$. There exists a bipartite 1-planar graph $G$ such that %the partite sets of $G$ have sizes $x$ and $y$ and
%
%$$|E(G)|\ge\left\{\begin{array}{rl}  2|V(G)|+4x-12 &\textrm{if\quad }y\ge6x-12,\\
%\\
%                                     \frac52 |V(G)|+\frac x2-\frac{17}{2} &\textrm{if\quad }y\le6x-12.
%                 \end{array}\right.$$
%\end{theorem}
%
%\begin{proof}
%It follows from lemmas \ref{ge} and \ref{le}.
%\end{proof}
%
%\begin{theorem}
%Let $x,y,z$ be positive integers such that $x\ge3$, $y=x+z$, $z\ge0$. Then there exists a bipartite %1-planar graph $G$ such that the partite sets of $G$ have sizes $x$ and $y$ and $|E(G)|\ge3|V(G)|-9-z$.
%\end{theorem}
%
%\begin{proof}
%It follows from lemmas \ref{3n-8}, \ref{odd} and \ref{xyz}.
%\end{proof}
%
%\begin{theorem}
%If $G$ is a bipartite 1-planar graph such that the partite sets of $G$ have sizes $x$ and $y$, $x\le y$, %then $|E(G)|\le \min\{2|V(G)|+6x-16;\, 3|V(G)|-6\}$.
%\end{theorem}
%
%\begin{proof}
%It follows from Theorem \ref{bip1plan} and Corollary \ref{lower}.
%\end{proof}
\clearpage

\end{document}